\newtheorem{thm}{\protect\theoremname}[section]
\newtheorem{prop}[thm]{\protect\propositionname}
\newtheorem{lem}[thm]{\protect\lemmaname}
\newtheorem{cor}[thm]{\protect\corollaryname}
\theoremstyle{definition}
\newtheorem{defn}[thm]{\protect\definitionname}
\newtheorem{example}[thm]{\protect\examplename}
\theoremstyle{remark}
\newtheorem{claim}[thm]{\protect\claimname}
\providecommand{\claimname}{Claim}
\providecommand{\corollaryname}{Corollary}
\providecommand{\definitionname}{Definition}
\providecommand{\examplename}{Example}
\providecommand{\lemmaname}{Lemma}
\providecommand{\propositionname}{Proposition}
\providecommand{\theoremname}{Theorem}
\begin{document}

\noindent                                             
\begin{picture}(150,36)                               
\put(5,20){\tiny{Submitted to}}                       
\put(5,7){\textbf{Topology Proceedings}}              
\put(0,0){\framebox(140,34){}}                        
\put(2,2){\framebox(136,30){}}                        
\end{picture}                                        
\vspace{0.5in}

\renewcommand{\bf}{\bfseries}
\renewcommand{\sc}{\scshape}
\vspace{0.5in}

\newcommand{\ns}[1]{\prescript{\ast}{}{#1}}
\newcommand{\st}[1]{\prescript{\circ}{}{#1}}
\newcommand{\PNS}{\operatorname{PNS}}
\newcommand{\SPNS}{\operatorname{SPNS}}

\title{Asymmetric completions of partial metric spaces}

\author{Takuma Imamura}
\address{Research Institute for Mathematical Sciences; Kyoto University; Kitashirakawa Oiwake-cho, Sakyo-ku, Kyoto 606-8502, Japan}
\email{timamura@kurims.kyoto-u.ac.jp}

\subjclass[2010]{Primary 54E50; Secondary 54J05}

\keywords{partial metric spaces; Cauchy completions; denseness; symmetric denseness;
nonstandard analysis.}

\begin{abstract}
Ge and Lin (2015) proved the existence and the uniqueness of p-Cauchy
completions of partial metric spaces under symmetric denseness. They
asked if every (non-empty) partial metric space $X$ has a p-Cauchy
completion $\bar{X}$ such that $X$ is dense but not symmetrically
dense in $\bar{X}$. We construct asymmetric p-Cauchy completions
for all non-empty partial metric spaces. This gives a positive answer
to the question. We also provide a nonstandard construction of partial
metric completions.
\end{abstract}

\maketitle

\section*{Introduction}

Metric spaces are among the most investigated types of spaces. Whilst
all metric spaces are $T_{1}$, non-$T_{1}$ spaces have also been
paid attention, particularly in the context of denotational semantics
of programming languages. In order to deal with such spaces in a similar
fashion to metric spaces, Matthews \cite{Mat92b,Mat94} introduced
the notion of partial metric. Roughly speaking, a partial metric space
is a generalised metric space where the self-distance is not necessarily
zero.
\begin{defn}[\cite{Mat94}]
A \emph{partial metric} (abbr. pmetric) on a set $X$ is a function
$p_{X}\colon X\times X\to\mathbb{R}_{\geq0}$ that satisfies the following
axioms:
\begin{enumerate}
\item[(P1)] $p_{X}\left(x,x\right)=p_{X}\left(x,y\right)=p_{X}\left(y,y\right)\implies x=y$;
\item[(P2)] $p_{X}\left(x,x\right)\leq p_{X}\left(x,y\right)$;
\item[(P3)] $p_{X}\left(x,y\right)=p_{X}\left(y,x\right)$;
\item[(P4)] $p_{X}\left(x,z\right)+p_{X}\left(y,y\right)\leq p_{X}\left(x,y\right)+p_{X}\left(y,z\right)$.
\end{enumerate}
A set equipped with a partial metric is called a \emph{partial metric
space} (abbr. pmetric space).
\end{defn}

Let $X$ be a pmetric space. For $x\in X$ and $\varepsilon\in\mathbb{R}_{+}$,
the open ball of centre $x$ and radius $\varepsilon$ is defined
by
\[
B_{\varepsilon}\left(x\right):=\set{y\in X|p_{X}\left(x,y\right)<p_{X}\left(x,x\right)+\varepsilon}.
\]
The family of all open balls generates a topology on $X$, called
the \emph{induced topology}. We always assume that $X$ is equipped
with the induced topology.
\begin{defn}[\cite{GL15}]
Let $X$ be a pmetric space and $A$ a subset of $X$.
\begin{enumerate}
\item $A$ is said to be \emph{(topologically) dense} in $X$ if for any
$x\in X$ and for any $\varepsilon>0$ there is a $y\in A$ such that
$y\in B_{\varepsilon}\left(x\right)$.
\item $A$ is said to be \emph{symmetrically dense} in $X$ if for any $x\in X$
and for any $\varepsilon>0$ there is a $y\in A$ such that $x\in B_{\varepsilon}\left(y\right)$
and $y\in B_{\varepsilon}\left(x\right)$.
\end{enumerate}
\end{defn}

Symmetric denseness is stronger than the usual (topological) denseness.
If $X$ is a metric space, then the conditions ``$x\in B_{\varepsilon}\left(y\right)$''
and ``$y\in B_{\varepsilon}\left(x\right)$'' are equivalent, so topological
and symmetric denseness coincide.
\begin{defn}[\cite{Mat94}]
Let $X$ be a pmetric and $\set{x_{n}}_{n\in\mathbb{N}}$ a sequence
in $X$.
\begin{enumerate}
\item $\set{x_{n}}_{n\in\mathbb{N}}$ is said to \emph{p-converge to} $x\in X$
if $p_{X}\left(x,x\right)=\lim_{n\to\infty}p_{X}\left(x,x_{n}\right)=\lim_{n\to\infty}p_{X}\left(x_{n},x_{n}\right)$.
\item $\set{x_{n}}_{n\in\mathbb{N}}$ is said to be \emph{p-Cauchy} if $\lim_{n,m\to\infty}p_{X}\left(x_{n},x_{m}\right)$
exists and is finite.
\end{enumerate}
The space $X$ is said to be \emph{p-Cauchy complete} if every p-Cauchy
sequence p-converges.
\end{defn}

\begin{defn}[\cite{Rom09}]
Let $X$ be a pmetric and $\set{x_{n}}_{n\in\mathbb{N}}$ a sequence
in $X$.
\begin{enumerate}
\item $\set{x_{n}}_{n\in\mathbb{N}}$ is said to $0$-converge to $x\in X$
if $p_{X}\left(x,x\right)=\lim_{n\to\infty}p_{X}\left(x,x_{n}\right)=\lim_{n\to\infty}p_{X}\left(x_{n},x_{n}\right)=0$.
\item $\set{x_{n}}_{n\in\mathbb{N}}$ is said to be $0$-Cauchy if $\lim_{n,m\to\infty}p_{X}\left(x_{n},x_{m}\right)=0$.
\end{enumerate}
The space $X$ is said to be \emph{$0$-Cauchy complete} if every
$0$-Cauchy sequence $0$-converges.
\end{defn}

$0$-convergence implies p-convergence; p-convergence implies the
usual (topological) convergence; and $0$-Cauciness implies p-Cauciness.
None of the implications is reversible in general. For $0$-Cauchy
sequences, p-convergence implies $0$-convergence. As a consequence,
p-Cauchy completeness implies $0$-Cauchy completeness. If $X$ is
a metric space, all of the implications are reversible. Some fixed-point
results on Cauchy complete metric spaces can be generalised to p-Cauchy
(or $0$-Cauchy) complete pmetric spaces \cite{IPR11,Mat92a,Mat92b,Mat94,Mat95,Rom09,Wad81}.
\begin{defn}[\cite{GL15}]
A \emph{p-Cauchy completion} of a pmetric space $X$ is a pair $\left(\bar{X},i\right)$
of a p-Cauchy complete pmetric space $\bar{X}$ and an isometric embedding
$i\colon X\to\bar{X}$ such that $i\left(X\right)$ is (topologically)
dense in $\bar{X}$.
\end{defn}

Ge and Lin \cite{GL15} proved the existence and the uniqueness of
p-Cauchy completions of pmetric spaces. Actually they proved that
every pmetric space $X$ has a unique (up to isometry) p-Cauchy completion
$\bar{X}$ in which $X$ is \emph{symmetrically} dense. It was open
whether the assumption of symmetric denseness in the uniqueness theorem
can be weakened to the usual denseness. It was also open whether every
(non-empty) pmetric space $X$ has a p-Cauchy completion $\bar{X}$
such that $X$ is dense but not symmetrically dense in $\bar{X}$.
Dung \cite{Dun17} provided \emph{an example of} p-Cauchy complete
pmetric spaces having dense but not symmetrically dense subsets, and
negatively answered the first problem.

The aim of \prettyref{sec:General-construction} is to solve the second
problem. We first give a quite simple example of a pmetric space that
has two different p-Cauchy completions. We then construct ``asymmetric''
p-Cauchy completions for \emph{all} non-empty pmetric spaces. So the
second problem is answered affirmatively. This also implies that symmetric
denseness in the uniqueness theorem cannot be weakened to topological
denseness in \emph{any} cases (except in the trivial case). Thus the
first problem is strongly negative. Because of this, symmetric denseness
might be more suitable for pmetric spaces than topological denseness.
In \prettyref{sec:Nonstandard-treatment}, we treat pmetric completions
\emph{via nonstandard analysis}.

We refer to \cite{BKMP09,HWZ17,Mat94} for basics on pmetrics and
\cite{HR85,Rob66,SL76} for nonstandard analysis. Nonstandard analysis
is used only within \prettyref{sec:Nonstandard-treatment}.

\section{\label{sec:General-construction}General construction of asymmetric
p-Cauchy completions}

Ge and Lin \cite{GL15} proved that every pmetric space $X$ has
a unique (up to isometry) p-Cauchy completion $\bar{X}$ where $X$
is symmetrically dense. The proof of the uniqueness theorem depends
on the assumption of symmetric denseness. In fact, there is a counterexample
when assuming just topological denseness.
\begin{prop}[\cite{Dun17}]
\label{prop:Two-completions}There exists a pmetric space that has
two p-Cauchy completions up to isometry.
\end{prop}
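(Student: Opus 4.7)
The plan is to exhibit, by hand, a concrete pmetric space that is \emph{already} p-Cauchy complete, so that it serves as its own p-Cauchy completion, and then to produce a second, strictly larger p-Cauchy completion by adjoining a single new point with a \emph{different} self-distance. The smallest such witness is $X := \set{a}$ equipped with $p_{X}(a,a) := 1$. Trivially $X$ is p-Cauchy complete, so the pair $(\bar{X}_{1},i_{1}) := (X,\text{identity})$ is one p-Cauchy completion.

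For a second completion I would pick $b \notin X$, set $\bar{X}_{2} := \set{a,b}$ with $i_{2}\colon X \hookrightarrow \bar{X}_{2}$ the inclusion, and define
\[
p_{\bar{X}_{2}}(a,a) := 1, \qquad p_{\bar{X}_{2}}(b,b) := 2, \qquad p_{\bar{X}_{2}}(a,b) := p_{\bar{X}_{2}}(b,a) := 2.
\]
The verification of (P1)--(P4) is a routine finite case check: (P1) holds because $p(a,a) \neq p(b,b)$ separates the two points, (P2) and (P3) are immediate, and the only nontrivial instance of (P4) reads $p(a,a) + p(b,b) = 3 \leq 4 = p(a,b) + p(b,a)$. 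Because $\bar{X}_{2}$ is finite, any p-Cauchy sequence must eventually be constant---a sequence taking both values infinitely often produces subsequential values $1$ and $2$ of $p(x_{n},x_{m})$, violating the existence of the joint limit---and constant sequences p-converge to their value, so $\bar{X}_{2}$ is p-Cauchy complete.

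Density of $i_{2}(X) = \set{a}$ in $\bar{X}_{2}$ boils down to a single nontrivial check: for every $\varepsilon > 0$ one has $a \in B_{\varepsilon}(b)$ since $p_{\bar{X}_{2}}(b,a) = 2 < 2 + \varepsilon = p_{\bar{X}_{2}}(b,b) + \varepsilon$. Finally, $\bar{X}_{1}$ and $\bar{X}_{2}$ are not isometric as completions: any isometry $f\colon \bar{X}_{1} \to \bar{X}_{2}$ with $f \circ i_{1} = i_{2}$ would have to be a bijection $\set{a} \to \set{a,b}$ extending the inclusion, which fails on cardinality grounds.

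There is no real technical obstacle; the only genuine step is the conceptual one of realising that, unlike in the metric setting, one may adjoin a new point $b$ whose self-distance strictly exceeds $p(a,a)$ and which is nonetheless topologically approximated by $a$ via $a \in B_{\varepsilon}(b)$, while $b \notin B_{\varepsilon}(a)$ for $\varepsilon \leq 1$. This asymmetry is exactly what lets the second completion differ from the first, and also shows that $\set{a}$ is dense but \emph{not} symmetrically dense in $\bar{X}_{2}$, foreshadowing the general asymmetric construction carried out in the remainder of the section.
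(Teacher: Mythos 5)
Your proposal is correct and follows essentially the same route as the paper: a one-point space together with a two-point p-Cauchy complete extension obtained by adjoining a point $b$ with strictly larger self-distance, distinguished from the trivial completion by cardinality. The only (immaterial) difference is that you use self-distances $1$ and $2$ where the paper uses $0$ and $1$.
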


\begin{proof}
Let $X$ be a one-point space $\set{a}$ together with a pmetric 
\[
p_{X}\left(a,a\right):=0.
\]
Then $X$ is p-Cauchy complete. In other words, $X$ is a p-Cauchy
completion of itself.

On the other hand, let $Y$ be a two-point space $\set{a,b}$ together
with a pmetric
\[
p_{Y}\left(a,a\right):=0,\ p_{Y}\left(a,b\right)=p_{Y}\left(b,a\right)=p_{Y}\left(b,b\right):=1.
\]
Then $Y$ is also p-Cauchy complete. Obviously the inclusion map $i\colon X\to Y$
is an isometry. For each $y\in Y$ and $\varepsilon>0$, since $p_{Y}\left(y,a\right)\leq p_{Y}\left(y,y\right)<p_{Y}\left(y,y\right)+\varepsilon$,
we have $a\in B_{\varepsilon}\left(y\right)$. Hence $X$ is dense
in $Y$. Thus $Y$ is a p-Cauchy completion of $X$. The cardinality
of $Y$ is greater than that of $X$, so $Y$ is not isometric to
$X$.
\end{proof}
\begin{prop}[\cite{Dun17}]
\label{prop:Asymmetric-completion}There exist a pmetric space $X$
and its p-Cauchy completion $\tilde{X}$ such that $X$ is not symmetrically
dense in $\tilde{X}$.
\end{prop}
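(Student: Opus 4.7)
The plan is to reuse the pair of spaces already constructed in the proof of \prettyref{prop:Two-completions}. Let $X = \set{a}$ with $p_{X}(a,a) = 0$, and let $\tilde{X} = \set{a,b}$ with the pmetric
\[
p_{\tilde{X}}(a,a) = 0,\quad p_{\tilde{X}}(a,b) = p_{\tilde{X}}(b,a) = p_{\tilde{X}}(b,b) = 1.
\]
The argument of \prettyref{prop:Two-completions} already establishes that $\tilde{X}$ is p-Cauchy complete, that the inclusion $X \hookrightarrow \tilde{X}$ is an isometric embedding, and that $X$ is topologically dense in $\tilde{X}$. So the only new thing to verify is that $X$ fails to be symmetrically dense in $\tilde{X}$.

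To this end I would test the symmetric denseness condition at the point $y = b$. Since $X$ is the singleton $\set{a}$, the only candidate witness is $x = a$, and symmetric denseness requires both $b \in B_{\varepsilon}(a)$ and $a \in B_{\varepsilon}(b)$ to hold for every $\varepsilon > 0$. Computing the first ball,
\[
B_{\varepsilon}(a) = \set{z \in \tilde{X} | p_{\tilde{X}}(a,z) < p_{\tilde{X}}(a,a) + \varepsilon} = \set{z \in \tilde{X} | p_{\tilde{X}}(a,z) < \varepsilon},
\]
one sees that $b \in B_{\varepsilon}(a)$ forces $\varepsilon > 1$. Choosing for instance $\varepsilon = 1/2$ then immediately defeats the symmetric denseness condition at $b$, while the topological condition $a \in B_{\varepsilon}(b)$ continues to hold because $p_{\tilde{X}}(b,a) = 1 < 1 + \varepsilon = p_{\tilde{X}}(b,b) + \varepsilon$.

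I do not anticipate any real obstacle here: the example is already at hand from \prettyref{prop:Two-completions}, and the asymmetry is precisely the phenomenon that distinguishes symmetric from topological denseness. The underlying reason is that topological denseness at $b$ only asks for $a$ to lie in a ball around $b$, whose radius is effectively measured from the large self-distance $p_{\tilde{X}}(b,b) = 1$, whereas symmetric denseness additionally demands $b$ to lie in a ball around $a$, measured from the much smaller self-distance $p_{\tilde{X}}(a,a) = 0$. The gap between these two self-distances is exactly what blocks the reverse inclusion and furnishes the desired counterexample.
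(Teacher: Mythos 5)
Your proposal is correct and is essentially identical to the paper's own proof: both use the two-point space from \prettyref{prop:Two-completions} and observe that $p_{\tilde{X}}(a,b)=1\geq p_{\tilde{X}}(a,a)+1$ forces $b\notin B_{\varepsilon}(a)$ for $0<\varepsilon<1$, so $b$ defeats symmetric denseness. No issues.
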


\begin{proof}
Let $X$ and $Y$ be as above. Since $p_{Y}\left(a,b\right)\geq p_{Y}\left(a,a\right)+1$,
it follows that $b\notin B_{\varepsilon}\left(a\right)$ for any $0<\varepsilon<1$.
Thus $b$ witnesses that $X$ is not symmetrically dense in $Y$.
\end{proof}
In the proof of \prettyref{prop:Asymmetric-completion} the asymmetric
p-Cauchy completion $Y$ of $X$ is obtained by attaching an extra
\emph{asymmetric} point $b$. Asymmetric p-Cauchy completions of general
pmetric spaces can also be obtained in a similar way.
\begin{lem}
\label{lem:Asymmetric-superset}For every non-empty pmetric space
$X$, there exists a pmetric space $Y\supseteq X$ such that $X$
is dense but not symmetrically dense in $Y$.
\end{lem}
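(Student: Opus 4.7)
The plan is to adjoin a single extra point to $X$, generalizing the construction of $Y$ from the proof of \prettyref{prop:Asymmetric-completion}. Since $X$ is non-empty, fix any reference point $x_{0}\in X$ and any constant $d>0$. Set $Y:=X\cup\{b\}$ for some fresh point $b\notin X$, and extend $p_{X}$ to $p_{Y}\colon Y\times Y\to\mathbb{R}_{\geq0}$ by
\[
p_{Y}(b,x):=p_{Y}(x,b):=p_{X}(x_{0},x)+d\ (x\in X),\qquad p_{Y}(b,b):=p_{X}(x_{0},x_{0})+d.
\]
Intuitively $b$ is a ``shifted copy'' of $x_{0}$ whose self-distance is larger by $d$.

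First I would verify the partial metric axioms. (P3) holds by construction, and (P2) at the new point reduces to (P2) for $p_{X}$ at $x_{0}$. For (P1), the only nontrivial case is $y=b$, $z=x\in X$: equality of $p_{Y}(b,b)$, $p_{Y}(b,x)$, $p_{Y}(x,x)$ would force $p_{X}(x,x)=p_{X}(x_{0},x)+d$, contradicting (P2) for $p_{X}$ because $d>0$, so the premise is vacuous. The main bookkeeping is (P4), which I would split into cases according to how many of the three arguments are $b$. In each case the constant $d$ cancels and the inequality reduces either to (P4) for $p_{X}$ after inserting $x_{0}$ appropriately (when exactly one argument is $b$), or to two uses of (P2) for $p_{X}$ together with $d\geq 0$ (when the first and third arguments are both $b$), or to an identity. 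I expect this case check to be the longest but entirely routine step.

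Finally I would verify the two denseness properties. Density at each point of $X$ is automatic, while density at $b$ is witnessed by $x_{0}$ itself, since $p_{Y}(b,x_{0})=p_{X}(x_{0},x_{0})+d=p_{Y}(b,b)<p_{Y}(b,b)+\varepsilon$ for every $\varepsilon>0$. For the failure of symmetric denseness I take $b$ as the witnessing point and fix $\varepsilon\in(0,d)$: if some $z\in X$ satisfied $b\in B_{\varepsilon}(z)$, then $p_{X}(x_{0},z)+d<p_{X}(z,z)+\varepsilon$, hence $p_{X}(x_{0},z)-p_{X}(z,z)<\varepsilon-d<0$, contradicting (P2) applied to the pair $(z,x_{0})$.

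The only genuine content is the choice of a strictly positive offset $d$: making $p_{Y}(b,b)$ exceed $p_{X}(x_{0},x_{0})$ by $d$ creates the asymmetric witness while simultaneously keeping $x_{0}$ arbitrarily close to $b$. No real obstacle is anticipated beyond the (P4) case analysis.
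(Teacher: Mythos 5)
Your construction is essentially identical to the paper's: the paper adjoins a single point $b$ with $p_Y(b,y)=p_X(a,y)+1$ for a fixed base point $a$, which is your construction with $d=1$, and the verifications of (P1)--(P4), denseness, and the failure of symmetric denseness proceed exactly as you outline. The proposal is correct.
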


\begin{proof}
Fix a base point $a\in X$. Consider a new point (say $b$) and let
$Y:=X\cup\set{b}$. Define a function $p_{Y}\colon Y\times Y\to\mathbb{R}$
by
\[
p_{Y}\left(x,y\right):=\begin{cases}
p_{X}\left(x,y\right), & x,y\in X,\\
p_{X}\left(a,y\right)+1, & x=b,\ y\in X,\\
p_{X}\left(x,a\right)+1, & x\in X,\ y=b,\\
p_{X}\left(a,a\right)+1, & x=y=b.
\end{cases}
\]
Clearly $p_{Y}$ extends $p_{X}$. For every $\varepsilon>0$, since
$p_{Y}\left(b,a\right)=p_{Y}\left(b,b\right)<p_{Y}\left(b,b\right)+\varepsilon$,
we have $a\in B_{\varepsilon}\left(b\right)$. Hence $X$ is dense
in $Y$. On the other hand, for any $x\in X$, since $p_{Y}\left(x,b\right)=p_{X}\left(x,a\right)+1\geq p_{Y}\left(x,x\right)+1$,
it follows that $b\notin B_{\varepsilon}\left(x\right)$ for any $0<\varepsilon<1$.
Hence $X$ is not symmetrically dense in $Y$.

We only need to verify that $p_{Y}$ is a pmetric.
\begin{enumerate}
\item[(P1)] Let $x,y\in Y$ and suppose that $x\neq y$. Case 1: $x,y\in X$.
Two of $p_{Y}\left(x,x\right)$, $p_{Y}\left(x,y\right)$ and $p_{Y}\left(y,y\right)$
are different by (P1) for $p_{X}$. Case 2: $x=b$ and $y\in X$.
Then $p_{Y}\left(y,y\right)=p_{X}\left(y,y\right)\leq p_{X}\left(a,y\right)<p_{X}\left(a,y\right)+1=p_{Y}\left(b,y\right)$
by (P2) and (P3) for $p_{X}$. Hence $p_{Y}\left(y,y\right)\neq p_{Y}\left(x,y\right)$.
Case 3: $x\in X$ and $y=b$. Similarly to Case 2, we have that $p_{Y}\left(x,x\right)\neq p_{Y}\left(x,y\right)$.
In each case, two of $p_{Y}\left(x,x\right)$, $p_{Y}\left(x,y\right)$
and $p_{Y}\left(y,y\right)$ are different.
\item[(P2)] Let $x,y\in Y$. Case 1: $x,y\in X$. Immediate from (P2) for $p_{X}$.
Case 2: $x=b$ and $y\in X$. By (P2) for $p_{X}$, we have that $p_{Y}\left(b,b\right)=p_{X}\left(a,a\right)+1\leq p_{X}\left(a,y\right)+1\leq p_{Y}\left(b,y\right)$.
Hence $p_{Y}\left(x,x\right)\leq p_{Y}\left(x,y\right)$. Case 3:
$x\in X$ and $y=b$. Similarly to the previous case. Case 4: $x=y=b$.
This case is trivial.
\item[(P3)] The definition of $p_{Y}$ is symmetric with respect to $x$ and
$y$. So (P3) for $p_{X}$ implies (P3) for $p_{Y}$.
\item[(P4)] Let $x,y,z\in Y$. If $x=y$, then $p_{Y}\left(x,z\right)+p_{Y}\left(y,y\right)=p_{Y}\left(y,z\right)+p_{Y}\left(x,y\right)$.
If $y=z$, then $p_{Y}\left(x,z\right)+p_{Y}\left(y,y\right)=p_{Y}\left(x,y\right)+p_{Y}\left(y,z\right)$.
If $x=z$, then by (P2) and (P3) for $p_{Y}$, we have $p_{Y}\left(x,z\right)+p_{Y}\left(y,y\right)=p_{Y}\left(x,x\right)+p_{Y}\left(y,y\right)\leq p_{Y}\left(x,y\right)+p_{Y}\left(y,z\right)$.
Thus we may assume that all of $x$, $y$ and $z$ are different.
Case 1: $x,y,z\in X$. Immediate from (P4) for $p_{X}$. Case 2: $x=b$
and $y,z\in X$. Then, by (P4) for $p_{Y}$,
\begin{align*}
p_{Y}\left(x,z\right)+p_{Y}\left(y,y\right) & =p_{Y}\left(b,z\right)+p_{Y}\left(y,y\right)\\
 & =p_{X}\left(a,z\right)+1+p_{X}\left(y,y\right)\\
 & \leq p_{X}\left(a,y\right)+p_{X}\left(y,z\right)+1\\
 & =p_{Y}\left(b,y\right)+p_{Y}\left(y,z\right)\\
 & =p_{Y}\left(x,y\right)+p_{Y}\left(y,z\right).
\end{align*}
Case 3: $x\in X$, $y=b$ and $z\in X$. By the definition of $p_{X}$
and (P4) for $p_{X}$,
\begin{align*}
p_{Y}\left(x,z\right)+p_{Y}\left(y,y\right) & =p_{X}\left(x,z\right)+p_{X}\left(a,a\right)+1\\
 & \leq p_{X}\left(x,a\right)+p_{X}\left(a,z\right)+1\\
 & =\left(p_{Y}\left(x,b\right)-1\right)+\left(p_{Y}\left(b,z\right)-1\right)+1\\
 & \leq p_{Y}\left(x,b\right)+p_{Y}\left(b,z\right)\\
 & =p_{Y}\left(x,y\right)+p_{Y}\left(y,z\right).
\end{align*}
Case 4: $x,y\in X$ and $z=b$. Similarly to Case 2.\qedhere
\end{enumerate}
\end{proof}
\begin{thm}
\label{thm:Asymmetric-completions}For every non-empty pmetric space
$X$, there exists a p-Cauchy completion $\tilde{X}$ of $X$ such
that $X$ is not symmetrically dense in $\tilde{X}$.
\end{thm}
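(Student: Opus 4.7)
The plan is to combine \prettyref{lem:Asymmetric-superset} with the Ge--Lin symmetric completion theorem. Given a non-empty pmetric space $X$, I would first apply \prettyref{lem:Asymmetric-superset} to produce a pmetric overspace $Y\supseteq X$ in which $X$ is dense but not symmetrically dense; the extra point $b\in Y$ satisfies $p_{Y}(x,b)\geq p_{Y}(x,x)+1$ for every $x\in X$. I would then take the symmetric p-Cauchy completion $\bar{Y}$ of $Y$ supplied by Ge and Lin, and simply set $\tilde{X}:=\bar{Y}$. Composing the inclusion $X\hookrightarrow Y$ with the isometric embedding $Y\hookrightarrow\bar{Y}$ gives an isometric embedding $X\hookrightarrow\tilde{X}$ into a p-Cauchy complete space; only denseness and \emph{non}-symmetric-denseness of $X$ in $\tilde{X}$ remain to be checked.

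For the denseness of $X$ in $\tilde{X}$, the natural route is to show that ordinary topological denseness composes in pmetric spaces. Given $z\in\tilde{X}$ and $\varepsilon>0$, pick $y\in Y$ with $p(z,y)<p(z,z)+\varepsilon/2$ using denseness of $Y$ in $\bar{Y}$ (implied by its symmetric denseness), then pick $x\in X$ with $p(y,x)<p(y,y)+\varepsilon/2$ using denseness of $X$ in $Y$. Axiom (P4) absorbs the intermediate self-distance:
\[
p(z,x)\;\leq\;p(z,y)+p(y,x)-p(y,y)\;<\;p(z,z)+\varepsilon,
\]
so $x\in B_{\varepsilon}(z)$ as required.

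For the failure of symmetric denseness, I would exhibit the same point $b\in Y\subseteq\tilde{X}$ as witness. Because the embedding $Y\hookrightarrow\tilde{X}$ is isometric, the inequality $p(x,b)\geq p(x,x)+1$ carries over verbatim to $\tilde{X}$, which forces $b\notin B_{\varepsilon}(x)$ for every $x\in X$ and every $0<\varepsilon<1$. Hence the symmetric-denseness condition at the point $b$ cannot be met, and $X$ fails to be symmetrically dense in $\tilde{X}$.

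I expect the only non-routine step to be the composition-of-denseness argument above, where the self-distance $p(y,y)$ must be absorbed via (P4) (this is the place where the partial-metric setting genuinely differs from the metric one). Everything else is formal bookkeeping with the two isometric embeddings.
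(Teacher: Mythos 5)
Your proposal is correct and follows essentially the same route as the paper: apply \prettyref{lem:Asymmetric-superset} to get $Y\supseteq X$ with $X$ dense but not symmetrically dense, then take the Ge--Lin p-Cauchy completion $\bar{Y}$ of $Y$. The paper states the conclusion without detail, whereas you explicitly verify the transitivity of denseness via (P4) and the persistence of the witness $b$ under the isometric embedding; both of these checks are sound.
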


\begin{proof}
By \prettyref{lem:Asymmetric-superset} we can construct a pmetric
space $Y\supseteq X$ such that $X$ is dense but not symmetrically
dense in $Y$. By \cite[Theorem 2]{GL15} $Y$ has a p-Cauchy completion
$\bar{Y}\supseteq Y$. Then $\bar{Y}$ is a p-Cauchy completion of
$X$ in which $X$ is dense but not symmetrically dense.
\end{proof}
Symmetric denseness is invariant under isometry, i.e. if there is
an isometry $f\colon\left(X,A\right)\to\left(Y,B\right)$, then $A$
is symmetrically dense in $X$ if and only if $B$ is symmetrically
dense in $Y$. From this fact, we obtain the following corollary.
\begin{cor}
Every non-empty pmetric space has at least two p-Cauchy completions
up to isometry.
\end{cor}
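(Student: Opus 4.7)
The plan is to exhibit two p-Cauchy completions of a given non-empty pmetric space $X$ that are distinguished by a property preserved under isometry. One completion is already in hand: by \cite[Theorem 2]{GL15} there is a p-Cauchy completion $\bar{X}$ of $X$ in which $X$ is symmetrically dense. The other is the ``asymmetric'' completion $\tilde{X}$ provided by \prettyref{thm:Asymmetric-completions}, in which $X$ is dense but \emph{not} symmetrically dense. I would then show that these two cannot be isometric as completions of $X$.

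The key step is the following short argument. Suppose for contradiction that $\bar{X}$ and $\tilde{X}$ are isometric as completions of $X$, i.e.\ there exists an isometry $f\colon\bar{X}\to\tilde{X}$ intertwining the two canonical embeddings of $X$. Then $f$ restricts to an isometry of pairs $\bigl(\bar{X},X\bigr)\to\bigl(\tilde{X},X\bigr)$, so by the invariance principle stated immediately before the corollary, $X$ is symmetrically dense in $\bar{X}$ if and only if $X$ is symmetrically dense in $\tilde{X}$. This contradicts the defining properties of $\bar{X}$ and $\tilde{X}$. Hence $\bar{X}$ and $\tilde{X}$ represent two distinct isometry classes of p-Cauchy completions of $X$.

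I do not expect any serious obstacle here, since essentially all the work has already been packaged into \prettyref{thm:Asymmetric-completions} and into the quoted invariance fact. The only mild subtlety worth flagging is the interpretation of ``up to isometry'' for completions: it must be read as isometry of pairs (respecting the embedding of $X$), which is the standard convention and the one implicitly used in the Ge--Lin uniqueness theorem. Under this reading the contradiction above is immediate, and the corollary follows.
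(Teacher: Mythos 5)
Your proposal is correct and is essentially the argument the paper intends: the corollary is derived by combining the symmetric completion of \cite[Theorem 2]{GL15} with the asymmetric completion of \prettyref{thm:Asymmetric-completions} and invoking the invariance of symmetric denseness under isometry of pairs, exactly as you do. Your remark about reading ``up to isometry'' as isometry respecting the embedding of $X$ is the right reading and matches the paper's usage.
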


\begin{example}
\label{exa:Kahn-domain}A more natural and non-trivial example comes
from the \emph{Kahn domain }$\Sigma^{\leq\omega}$. Let $\Sigma$
be a (non-empty) set of symbols. The Kahn domain $\Sigma^{\leq\omega}$
is the set of finite and infinite strings on $\Sigma$ together with
the pmetric
\[
p_{\Sigma^{\leq\omega}}\left(x,y\right):=2^{-\left(\text{the maximum length of common initial segments of }x\text{ and }y\right)}.
\]
The subspace $\Sigma^{<\omega}$ of finite strings is p-Cauchy incomplete.
The whole space $\Sigma^{\leq\omega}$ is p-Cauchy complete, and contains
$\Sigma^{<\omega}$ as a symmetrically dense subspace. Hence $\Sigma^{\leq\omega}$
is a p-Cauchy completion of $\Sigma^{<\omega}$. On the other hand,
the subspace $\Sigma^{<\omega}\setminus\set{\varepsilon}$, where
$\varepsilon$ is the empty string, is p-Cauchy incomplete. The subspace
$\Sigma^{\leq\omega}\setminus\set{\varepsilon}$ is p-Cauchy complete,
and contains $\Sigma^{\leq\omega}\setminus\set{\varepsilon}$ as a
symmetrically dense subspace. Hence $\Sigma^{\leq\omega}\setminus\set{\varepsilon}$
is a p-Cauchy completion of $\Sigma^{<\omega}\setminus\set{\varepsilon}$.
However, $\Sigma^{\leq\omega}$ contains $\Sigma^{<\omega}\setminus\set{\varepsilon}$
as a dense subspace. Therefore $\Sigma^{\leq\omega}$ is also a p-Cauchy
completion of $\Sigma^{<\omega}\setminus\set{\varepsilon}$. Notice
that $\Sigma^{<\omega}\setminus\set{\varepsilon}$ is not symmetrically
dense in $\Sigma^{\leq\omega}$. The empty string $\varepsilon$ plays
the same role as $b$ in the proof of \prettyref{lem:Asymmetric-superset}.
\end{example}

Symmetric denseness is a more appropriate notion of denseness for pmetric
spaces than topological denseness. First of all, the uniqueness theorem
holds under symmetric denseness \cite[Theorem 2]{GL15}, in contrast
with asymmetric (topological) denseness. Furthermore, the proof of
\cite[Theorem 2]{GL15} actually shows that the symmetric p-Cauchy
completion has the following universal property. (Note that the proof
of \cite[Proposition 1]{GL15} does not use the assumption that $Y$
is symmetrically dense in $Y^{\ast}$.)
\begin{thm}[{\cite[Proposition 1]{GL15}}]
Let $X$ be a pmetric space, $\bar{X}$ a p-Cauchy completion of
$X$, and $Y$ a p-Cauchy complete pmetric space. Suppose $X$ is
symmetrically dense in $\bar{X}$. Then every isometric embedding
$f\colon X\to Y$ has a unique extension $\bar{f}\colon\bar{X}\to Y$.
\end{thm}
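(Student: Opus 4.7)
The plan is to extend $f$ pointwise by sending each $\bar{x}\in\bar{X}$ to the limit of the images of a symmetrically convergent approximating sequence from $X$. Concretely, using symmetric denseness, for each $\bar{x}\in\bar{X}$ I would pick a sequence $(x_n)\subseteq X$ with $x_n\in B_{1/n}(\bar{x})$ and $\bar{x}\in B_{1/n}(x_n)$, and first show that this forces both $p_{\bar{X}}(x_n,\bar{x})\to p_{\bar{X}}(\bar{x},\bar{x})$ and $p_{\bar{X}}(x_n,x_n)\to p_{\bar{X}}(\bar{x},\bar{x})$. The key estimates come from (P2) together with the definitions of the two ball conditions: the first gives $p_{\bar{X}}(x_n,\bar{x})<p_{\bar{X}}(\bar{x},\bar{x})+1/n$ and the second, together with (P2), squeezes $p_{\bar{X}}(x_n,x_n)$ between $p_{\bar{X}}(x_n,\bar{x})-1/n$ and $p_{\bar{X}}(x_n,\bar{x})$. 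A single application of (P4) then shows $(x_n)$ is p-Cauchy in $\bar{X}$, hence (since $f$ is an isometry) $(f(x_n))$ is p-Cauchy in $Y$, so by p-Cauchy completeness of $Y$ it p-converges to some $y\in Y$, and I set $\bar{f}(\bar{x}):=y$.

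Next I would check well-definedness: if $(x_n)$ and $(x'_n)$ are two symmetrically approximating sequences for $\bar{x}$, interleave them to obtain a third such sequence; its $f$-image is p-Cauchy in $Y$ and p-converges, forcing the two subsequential limits to coincide by an application of (P1) in $Y$ (after bounding $p_Y$ between the two limits by (P2) and (P4)). The fact that $\bar{f}$ extends $f$ is immediate from the constant sequence.

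To prove $\bar{f}$ is isometric, the main technical point is that the pmetric is \emph{jointly continuous} along symmetrically convergent sequences: given $x_n\to\bar{x}$ and $x'_n\to\bar{x}'$ symmetrically in $\bar{X}$, two back-to-back applications of (P4) (one with intermediate point $\bar{x}$, one with $\bar{x}'$) sandwich $p_{\bar{X}}(x_n,x'_n)$ between quantities both converging to $p_{\bar{X}}(\bar{x},\bar{x}')$. By isometry of $f$, the same holds for $p_Y(f(x_n),f(x'_n))$, and another (P4)-sandwich in $Y$ — using only the (ordinary) p-convergence $f(x_n)\to\bar{f}(\bar{x})$ and $f(x'_n)\to\bar{f}(\bar{x}')$ — shows $p_Y(f(x_n),f(x'_n))\to p_Y(\bar{f}(\bar{x}),\bar{f}(\bar{x}'))$. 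Comparing the two limits yields $p_Y(\bar{f}(\bar{x}),\bar{f}(\bar{x}'))=p_{\bar{X}}(\bar{x},\bar{x}')$.

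For uniqueness, suppose $g\colon\bar{X}\to Y$ is another isometric extension. Then along a symmetric approximating sequence $(x_n)$ for $\bar{x}$, $(f(x_n))=(g(x_n))$ p-converges to $g(\bar{x})$ by isometry of $g$, and also to $\bar{f}(\bar{x})$ by construction; a final (P4)-plus-(P1) argument in $Y$ forces $g(\bar{x})=\bar{f}(\bar{x})$. The main obstacle throughout is the joint continuity step in the isometry proof: unlike the metric case, p-convergence in $\bar{X}$ alone is too weak to control $p_{\bar{X}}(x_n,x'_n)$ from below, and it is precisely the \emph{symmetric} nature of the approximation (the second ball condition $\bar{x}\in B_{1/n}(x_n)$) that delivers $p_{\bar{X}}(x_n,x_n)\to p_{\bar{X}}(\bar{x},\bar{x})$ and thereby unlocks the lower (P4)-sandwich — which is exactly why the hypothesis of symmetric, not merely topological, denseness is used.
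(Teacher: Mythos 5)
Your argument is correct and is essentially the standard density-extension proof that the paper itself does not reproduce (it imports the statement from [GL15, Proposition 1]): approximate each point of $\bar{X}$ by a symmetrically approximating sequence from $X$, push forward by $f$, take the p-limit in the p-Cauchy complete space $Y$, and use two-sided (P2)/(P4) sandwiches for well-definedness, the isometry property, and uniqueness. You have also correctly isolated the one place where symmetric denseness is indispensable, namely that $\bar{x}\in B_{1/n}(x_n)$ forces $p_{\bar{X}}(x_n,x_n)\to p_{\bar{X}}(\bar{x},\bar{x})$ and thereby yields the lower bound in the joint-continuity step, which mere topological denseness cannot supply -- exactly the failure exhibited by the paper's asymmetric completions.
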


\begin{thm}[{\cite[Theorem 2]{GL15}}]
Let $i\colon X\to\bar{X}$ and $j\colon X\to\tilde{X}$ be p-Cauchy
completions of a pmetric space $X$. Suppose $i\left(X\right)$ is
symmetrically dense in $\bar{X}$. Then there exists a unique isometric
embedding $f\colon\bar{X}\to\tilde{X}$ such that
\[
\xymatrix{ & X\ar[dl]_{i}\ar[dr]^{j}\\
\bar{X}\ar@{-->}[rr]^{f} &  & \tilde{X}
}
\]
is commutative.
\end{thm}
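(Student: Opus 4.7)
The plan is to reduce the statement directly to the preceding universal property theorem. Since $\tilde{X}$ is p-Cauchy complete and $j\colon X\to\tilde{X}$ is an isometric embedding, the universal property applied to the p-Cauchy completion $i\colon X\to\bar{X}$ (which uses the hypothesis that $i(X)$ is symmetrically dense in $\bar{X}$) produces a unique map $f\colon\bar{X}\to\tilde{X}$ with $f\circ i=j$. So the commutativity and uniqueness parts are immediate; the only remaining task is to verify that this $f$ is an isometric embedding.

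To show $f$ is isometric, I would recall how $f$ is constructed: given $x\in\bar{X}$, symmetric denseness of $i(X)$ produces a sequence $(x_{n})$ in $X$ with $i(x_{n})$ symmetrically, hence p-convergently, converging to $x$. This makes $(i(x_{n}))$ p-Cauchy, so $(j(x_{n}))$ is p-Cauchy in $\tilde{X}$, and $f(x)$ is its p-limit. Given also $y\in\bar{X}$ with associated sequence $(y_{n})$, I would establish continuity of the pmetric under p-convergence via (P4): from the inequalities
\[
p_{\bar X}(x_n,y_n)+p_{\bar X}(x,x)+p_{\bar X}(y,y)\leq p_{\bar X}(x_n,x)+p_{\bar X}(x,y)+p_{\bar X}(y,y_n),
\]
and the symmetric counterpart interchanging the roles of the two sequences and their limits, the definition of p-convergence forces $p_{\bar X}(x_n,y_n)\to p_{\bar X}(x,y)$; the same argument in $\tilde{X}$ gives $p_{\tilde X}(j(x_n),j(y_n))\to p_{\tilde X}(f(x),f(y))$. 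Since $i$ and $j$ are isometries, $p_{\bar X}(i(x_n),i(y_n))=p_X(x_n,y_n)=p_{\tilde X}(j(x_n),j(y_n))$, and passing to limits yields $p_{\bar X}(x,y)=p_{\tilde X}(f(x),f(y))$.

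The main obstacle I anticipate is the pmetric-continuity step: in general, topological convergence in a pmetric space need not make $p$ continuous, so it is essential to use the stronger p-convergence (equivalently, symmetric convergence of approximating sequences from $i(X)$) that symmetric denseness supplies. Once this continuity is in hand, injectivity of $f$ follows from (P1) applied to $p_{\tilde X}(f(x),f(x))=p_{\bar X}(x,x)$, $p_{\tilde X}(f(x),f(y))=p_{\bar X}(x,y)$, and $p_{\tilde X}(f(y),f(y))=p_{\bar X}(y,y)$, completing the proof that $f$ is an isometric embedding. Uniqueness of $f$ is inherited from uniqueness in the universal property.
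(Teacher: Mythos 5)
Your proposal is correct and follows exactly the route the paper intends: the paper states this result only as a quotation of [GL15, Theorem 2] without reproving it, and its surrounding discussion explicitly presents the uniqueness theorem as an instance of the preceding universal property, which is precisely how you obtain existence, commutativity and uniqueness of $f$. Your sequence-based verification that $f$ is isometric --- symmetric denseness giving p-convergent approximating sequences, p-convergent implies p-Cauchy, and two applications of (P4) yielding continuity of the pmetric along p-convergent sequences --- is sound (modulo the routine check that the p-limit defining $f(x)$ is unique and independent of the approximating sequence, which follows from (P1) and (P4)) and is essentially the content of the cited proof of [GL15, Proposition 1].
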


On the other hand, asymmetric p-Cauchy completions do not have such
properties.
\begin{thm}
Let $X$ be a non-empty pmetric space. There exist a p-Cauchy completion
$\tilde{X}$ of $X$, a p-Cauchy complete pmetric space $Y$, and
an isometric embedding $f\colon X\to Y$ such that $f$ has no isometric
extension $\tilde{f}\colon\tilde{X}\to Y$.
\end{thm}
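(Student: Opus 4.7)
The plan is to take $Y$ to be the \emph{symmetric} p-Cauchy completion of $X$ guaranteed by Ge and Lin's theorem, with isometric embedding $f\colon X\to Y$ such that $f(X)$ is symmetrically dense in $Y$, and to take $\tilde{X}$ to be the asymmetric p-Cauchy completion constructed in \prettyref{thm:Asymmetric-completions}. The obstruction to extending $f$ will be located at the ``bad'' point $b$ added to $X$ in the proof of \prettyref{lem:Asymmetric-superset}: this point lies in $\tilde{X}$ but cannot be mapped into $Y$ compatibly with the isometry condition.

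In more detail, I would fix a base point $a\in X$ and build $Y_0=X\cup\{b\}$ as in \prettyref{lem:Asymmetric-superset}, so that in particular $p_{Y_0}(b,x)=p_X(a,x)+1$ and $p_{Y_0}(b,b)=p_X(a,a)+1$ for $x\in X$. Let $\tilde{X}$ be a p-Cauchy completion of $Y_0$ (which exists by \cite[Theorem~2]{GL15}); then $X\subseteq Y_0\subseteq\tilde{X}$ and $b\in\tilde{X}$, and $X$ is dense but not symmetrically dense in $\tilde{X}$. Suppose toward a contradiction that there is an isometric extension $\tilde{f}\colon\tilde{X}\to Y$ of $f$. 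Applying isometry to the element $b$ gives, for every $x\in X$,
\[
p_Y(\tilde{f}(b),f(x))=p_{\tilde{X}}(b,x)=p_X(a,x)+1\geq p_X(x,x)+1=p_Y(f(x),f(x))+1,
\]
where the inequality uses (P2) and (P3) for $p_X$.

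On the other hand, since $f(X)$ is symmetrically dense in $Y$, for any $\varepsilon\in(0,1)$ there exists $x\in X$ with $\tilde{f}(b)\in B_{\varepsilon}(f(x))$, which means
\[
p_Y(\tilde{f}(b),f(x))<p_Y(f(x),f(x))+\varepsilon<p_Y(f(x),f(x))+1.
\]
Comparing this with the previous display gives the desired contradiction, so no such $\tilde{f}$ exists.

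The only nontrivial step is the computation $p_Y(\tilde{f}(b),f(x))=p_X(a,x)+1$, which is forced by the definition of $p_{Y_0}$ together with the fact that $\tilde{f}$ extends $f$ isometrically; everything else is a direct application of the two preceding results. The main conceptual point — rather than a technical obstacle — is simply recognizing that the asymmetric point $b$ is too far from every element of $X$ (in the ``outgoing'' direction from $x$) to fit into a space in which $X$ is symmetrically dense, so $Y$ itself serves as the witness $Y$ demanded by the theorem.
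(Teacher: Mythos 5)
Your proof is correct, and it rests on the same basic obstruction as the paper's --- the adjoined asymmetric point $b$ of \prettyref{lem:Asymmetric-superset} cannot be placed isometrically --- but the two arguments are set up differently, and yours supplies the step the paper leaves unargued. The paper lets $Y$ be an \emph{arbitrary} p-Cauchy completion of $X$, lets $\tilde{X}$ be an asymmetric completion of that $Y$, and then simply asserts that the inclusion $X\to Y$ admits no isometric extension $\tilde{X}\to Y$. You instead take $Y$ to be the \emph{symmetric} completion of Ge and Lin, attach $b$ directly to $X$, and derive an explicit contradiction: isometry forces $p_{Y}(\tilde{f}(b),f(x))\geq p_{Y}(f(x),f(x))+1$ for every $x\in X$, while symmetric denseness of $f(X)$ in $Y$ produces an $x$ violating this for $\varepsilon<1$. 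Your choice of $Y$ is doing real work, not just streamlining: the extension $\tilde{f}$ is only required to agree with $f$ on $X$, not on all of $Y$, and for a merely topologically dense completion $Y$ the paper's assertion can actually fail. For instance, with $X=\set{a}$, $p(a,a)=0$, and $Y=\set{a}\cup\set{b_{n}|n\in\mathbb{N}}$ where $p(a,b_{n})=p(b_{n},b_{n})=1$ and $p(b_{n},b_{m})=2$ for $n\neq m$, the space $Y\cup\set{b}$ (with $b$ attached at base point $a$) is already p-Cauchy complete and embeds isometrically into $Y$ over $X$ via $b\mapsto b_{0}$, $b_{n}\mapsto b_{n+1}$. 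So your argument is the one that genuinely closes the proof; the only price is invoking the existence of the symmetric completion, which is available from \cite[Theorem 2]{GL15}.
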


\begin{proof}
Let $Y$ be an arbitrary p-Cauchy completion of $X$, and let $\tilde{X}$
be an asymmetric p-Cauchy completion of $Y$. Then $\tilde{X}$ is
also a p-Cauchy completion of $X$. Obviously the inclusion map $i_{XY}\colon X\to Y$
is an isometric embedding. However, $i_{XY}$ cannot be isometrically
extended to $\tilde{X}\to Y$.
\end{proof}
\begin{example}
Recall that $\Sigma^{\leq\omega}\setminus\set{\varepsilon}$ is p-Cauchy
complete and that $\Sigma^{\leq\omega}$ is an (asymmetric) p-Cauchy
completion of $\Sigma^{\leq\omega}\setminus\set{\varepsilon}$ (see
\prettyref{exa:Kahn-domain}). The identity map $\Sigma^{\leq\omega}\setminus\set{\varepsilon}\to\Sigma^{\leq\omega}\setminus\set{\varepsilon}$
is an isometric embedding having no isometric extension $\Sigma^{\leq\omega}\to\Sigma^{\leq\omega}\setminus\set{\varepsilon}$.
\end{example}

\section{\label{sec:Nonstandard-treatment}Nonstandard treatment of pmetric
completions}

It is well-known that the Cauchy completion of a metric space (and
of a uniform space) can be constructed via nonstandard analysis (see
Stroyan and Luxemburg \cite[8.4.28 Theorem]{SL76} or Hurd and Loeb
\cite[III.3.17 Theorem]{HR85}). We first summarise that construction.
Let $X$ be a metric space. Fix a sufficiently saturated nonstandard
extension $\ns{X}$ of $X$.\footnote{For our purpose, we only need to assume the $\aleph_{1}$-saturation
property. In fact, every proper elementary extension of a sufficiently
rich (set-theoretic) universe automatically satisfies the $\aleph_{1}$-saturation
condition.}
\begin{defn}[\cite{SL76}]
A point $x\in\ns{X}$ is said to be \emph{prenearstandard} if for
each $\varepsilon\in\mathbb{R}_{>0}$ there exists an $x_{\varepsilon}\in X$
such that $\ns{d_{X}}\left(x_{\varepsilon},x\right)<\varepsilon$,
i.e. $x\in\ns{B_{\varepsilon}}\left(x_{\varepsilon}\right)$. Denote
the set of all prenearstandard points by $\PNS\left(X\right)$.
\end{defn}

\begin{defn}[\cite{Rob66}]
Two points $x,y\in\ns{X}$ are said to be \emph{infinitesimally close}
(denoted by $x\approx_{X}y$) if the distance $\ns{d_{X}}\left(x,y\right)$
is infinitesimal.
\end{defn}

Then the quotient set 
\[
\bar{X}:=\PNS\left(X\right)/\approx_{X}
\]
equipped with the metric
\[
d_{\bar{X}}\left(\left[x\right],\left[y\right]\right):=\st{\left(\ns{d_{X}}\left(x,y\right)\right)}
\]
is a Cauchy completion of $X$, where $\st{x}$ refers to the standard
part of a finite hyperreal number $x$, which is a unique (standard)
real number infinitesimally close to $x$.

By mimicking the above construction, one can reconstruct the symmetric
p-Cauchy completion of a pmetric space, originally constructed by
Ge and Lin \cite{GL15}. Let $X$ be a pmetric space.
\begin{defn}
A point $x\in\ns{X}$ is said to be\emph{ prenearstandard} if for
each $\varepsilon\in\mathbb{R}_{>0}$ there exists an $x_{\varepsilon}\in X$
such that $\ns{p_{X}}\left(x_{\varepsilon},x\right)<p_{X}\left(x_{\varepsilon},x_{\varepsilon}\right)+\varepsilon$,
i.e. $x\in\ns{B_{\varepsilon}}\left(x_{\varepsilon}\right)$. Denote
the set of all prenearstandard points by $\PNS\left(X\right)$.
\end{defn}

Unlike the case of metric spaces, we need the following stronger property.
\begin{defn}
A point $x\in\ns{X}$ is said to be \emph{symmetrically prenearstandard}
if for each $\varepsilon\in\mathbb{R}_{>0}$ there exists an $x_{\varepsilon}\in X$
such that $\ns{p_{X}}\left(x_{\varepsilon},x\right)<p_{X}\left(x_{\varepsilon},x_{\varepsilon}\right)+\varepsilon$
and $\ns{p_{X}}\left(x,x_{\varepsilon}\right)<\ns{p_{X}}\left(x,x\right)+\varepsilon$,
i.e. $x\in\ns{B_{\varepsilon}}\left(x_{\varepsilon}\right)$ and $x_{\varepsilon}\in\ns{B_{\varepsilon}}\left(x\right)$.
Denote the set of all symmetrically prenearstandard points by $\SPNS\left(X\right)$.
\end{defn}

\begin{defn}
Two points $x,y\in\ns{X}$ are said to be \emph{infinitesimally close}
(denoted by $x\approx_{X}^{p}y$) if $\ns{p_{X}}\left(x,x\right)\approx_{\mathbb{R}}\ns{p_{X}}\left(x,y\right)\approx_{\mathbb{R}}\ns{p_{X}}\left(y,y\right)$.
\end{defn}

Note that if $X$ is a metric space, symmetric prenearstandardness
is equivalent to prenearstandardness, and $\approx_{X}^{p}$ coincides
with $\approx_{X}$.
\begin{claim}
$\approx_{X}^{p}$ is an equivalence relation on $\ns{X}$.
\end{claim}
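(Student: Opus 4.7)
The plan is to verify reflexivity, symmetry, and transitivity in turn. Reflexivity is immediate, since the defining chain $\ns{p_{X}}(x,x)\approx_{\mathbb{R}}\ns{p_{X}}(x,x)\approx_{\mathbb{R}}\ns{p_{X}}(x,x)$ is tautological. Symmetry follows from the transferred axiom (P3), which yields $\ns{p_{X}}(x,y)=\ns{p_{X}}(y,x)$, so one may read the defining chain backwards.

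Transitivity is the only substantive part. The strategy is to sandwich $\ns{p_{X}}(x,z)$ between $\ns{p_{X}}(x,x)$ and a quantity infinitesimally close to it, using (P4) for the upper bound and (P2) for the lower bound. Suppose $x\approx_{X}^{p}y$ and $y\approx_{X}^{p}z$. Concatenating the two hypothesised chains makes all five quantities $\ns{p_{X}}(x,x)$, $\ns{p_{X}}(x,y)$, $\ns{p_{X}}(y,y)$, $\ns{p_{X}}(y,z)$, $\ns{p_{X}}(z,z)$ mutually infinitesimally close in $\ns{\mathbb{R}}$, so it suffices to show $\ns{p_{X}}(x,z)\approx_{\mathbb{R}}\ns{p_{X}}(x,x)$. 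The transferred (P4) gives
\[
\ns{p_{X}}(x,z)\leq\ns{p_{X}}(x,y)+\ns{p_{X}}(y,z)-\ns{p_{X}}(y,y),
\]
whose right-hand side differs from $\ns{p_{X}}(x,x)$ by an infinitesimal since $\ns{p_{X}}(x,y)\approx\ns{p_{X}}(x,x)$ and $\ns{p_{X}}(y,z)\approx\ns{p_{X}}(y,y)$. The transferred (P2) supplies the lower bound $\ns{p_{X}}(x,x)\leq\ns{p_{X}}(x,z)$. Combining these gives the desired infinitesimal closeness.

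I do not anticipate a serious obstacle; the main conceptual point is that (P4) plays the role of the triangle inequality in the partial-metric setting, with the $-\ns{p_{X}}(y,y)$ term compensating exactly for the possibly nonzero self-distance at $y$, so no issue arises from the fact that $\ns{p_{X}}(y,y)$ need not vanish. No saturation hypothesis is needed, and arithmetic within $\ns{\mathbb{R}}$ presents no difficulty because $\approx_{\mathbb{R}}$ is a congruence for addition and subtraction of finite hyperreals.
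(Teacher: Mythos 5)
Your proof is correct and follows essentially the same route as the paper: the transferred (P4) supplies the upper bound on $\ns{p_{X}}(x,z)$, the transferred (P2) supplies the lower bound, and the sandwich yields the required infinitesimal closeness. The only difference is cosmetic --- you target $\ns{p_{X}}(x,x)$ directly and observe that closeness to $\ns{p_{X}}(z,z)$ then follows for free from the mutual closeness of all five quantities, whereas the paper runs the symmetric argument a second time.
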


\begin{proof}
The reflexivity and the symmetricity are trivial. Only the transitivity
is nontrivial. Let $x,y,z\in\ns{X}$ and suppose that $x\approx_{X}^{p}y\approx_{X}^{p}z$.
Then
\begin{align*}
\ns{p_{X}}\left(x,z\right) & \leq\ns{p_{X}}\left(x,y\right)+\ns{p_{X}}\left(y,z\right)-\ns{p_{X}}\left(y,y\right)\\
 & \approx_{\mathbb{R}}\ns{p_{X}}\left(y,y\right)+\ns{p_{X}}\left(z,z\right)-\ns{p_{X}}\left(y,y\right)\\
 & \approx_{\mathbb{R}}\ns{p_{X}}\left(z,z\right)\\
 & \leq\ns{p_{X}}\left(x,z\right).
\end{align*}
Hence $\ns{p_{X}}\left(x,z\right)\approx_{\mathbb{R}}\ns{p_{X}}\left(z,z\right)$.
Similarly $\ns{p_{X}}\left(x,z\right)\approx_{\mathbb{R}}\ns{p_{X}}\left(x,x\right)$.
Therefore $x\approx_{X}^{p}y$.
\end{proof}
Now consider the quotient set
\[
\bar{X}:=\SPNS\left(X\right)/\approx_{X}^{p}
\]
together with the pmetric
\[
p_{\bar{X}}\left(\left[x\right],\left[y\right]\right):=\st{\left(\ns{p_{X}}\left(x,y\right)\right)}.
\]
Let us show that $\left(\bar{X},p_{\bar{X}}\right)$ is an asymmetric
completion of $\left(X,p_{X}\right)$.
\begin{claim}
\label{claim:welldef-of-p-barX}$p_{\bar{X}}$ is well-defined.
\end{claim}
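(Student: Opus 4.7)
The plan is to verify two things: (i) that $\ns{p_X}(x,y)$ is a finite hyperreal whenever $x,y\in\SPNS(X)$, so that taking its standard part makes sense, and (ii) that the resulting standard real depends only on the $\approx_X^p$-classes of $x$ and $y$.

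For finiteness, first I would note that symmetric prenearstandardness of $x$ with $\varepsilon=1$ yields an $x_1\in X$ with $\ns{p_X}(x_1,x)<p_X(x_1,x_1)+1$, so $\ns{p_X}(x_1,x)$ is bounded above by a standard real; by (P2), $\ns{p_X}(x,x)\leq\ns{p_X}(x,x_1)$ is also finite. Given another $y\in\SPNS(X)$ with witness $y_1$, two applications of (P4) give
\[
\ns{p_X}(x,y)\leq \ns{p_X}(x,x_1)+p_X(x_1,y_1)+\ns{p_X}(y_1,y)-p_X(x_1,x_1)-p_X(y_1,y_1),
\]
which is a sum of finite hyperreals; combined with $\ns{p_X}(x,y)\geq 0$, this shows finiteness. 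Hence $\st{(\ns{p_X}(x,y))}$ is a well-defined nonnegative real.

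For independence of representatives, suppose $x\approx_X^p x'$ and $y\approx_X^p y'$. Two applications of (P4) give
\[
\ns{p_X}(x,y)\leq \ns{p_X}(x,x')+\ns{p_X}(x',y')+\ns{p_X}(y',y)-\ns{p_X}(x',x')-\ns{p_X}(y',y').
\]
By definition of $\approx_X^p$, $\ns{p_X}(x,x')-\ns{p_X}(x',x')\approx_{\mathbb{R}}0$ and $\ns{p_X}(y',y)-\ns{p_X}(y',y')\approx_{\mathbb{R}}0$, so $\ns{p_X}(x,y)-\ns{p_X}(x',y')$ is bounded above by an infinitesimal. The symmetric argument, starting from $\ns{p_X}(x',y')$ and routing through $x$ and $y$, yields the reverse bound, so $\ns{p_X}(x,y)\approx_{\mathbb{R}}\ns{p_X}(x',y')$ and their standard parts coincide.

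There is no real obstacle here: the argument is a routine adaptation of the manipulation already used in the transitivity claim for $\approx_X^p$, and the only subtlety is remembering to exploit the strengthened (symmetric) witness condition to bound $\ns{p_X}(x,x_1)$ in terms of a standard quantity when establishing finiteness.
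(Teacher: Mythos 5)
Your proof is correct and follows essentially the same route as the paper: finiteness via the double application of (P4) through standard witnesses $x_{1},y_{1}$ obtained with $\varepsilon=1$, and invariance via the same double-(P4) chain through $x'$ and $y'$ combined with the definition of $\approx_{X}^{p}$. One minor remark: your closing comment overstates the role of the \emph{symmetric} witness condition --- ordinary prenearstandardness already bounds $\ns{p_{X}}\left(x,x_{1}\right)=\ns{p_{X}}\left(x_{1},x\right)$ by the standard quantity $p_{X}\left(x_{1},x_{1}\right)+1$ via (P3), which is exactly what the paper uses.
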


\begin{proof}
We must verify that $\ns{p_{X}}\left(x,y\right)$ is finite and invariant
under infinitesimal closeness.

Let $x,y\in\SPNS\left(X\right)$. Since $x,y$ are prenearstandard,
we can take $x_{1},y_{1}\in X$ such that $\ns{p_{X}}\left(x_{1},x\right)\leq p_{X}\left(x_{1},x_{1}\right)+1$
and $\ns{p_{X}}\left(y_{1},y\right)\leq p_{X}\left(y_{1},y_{1}\right)+1$.
Then
\begin{align*}
\ns{p_{X}}\left(x,y\right) & \leq\ns{p_{X}}\left(x,x_{1}\right)+p_{X}\left(x_{1},y_{1}\right)+\ns{p_{X}}\left(y_{1},y\right)-p_{X}\left(x_{1},x_{1}\right)-p_{X}\left(y_{1},y_{1}\right)\\
 & \leq p_{X}\left(x_{1},x_{1}\right)+1+p_{X}\left(x_{1},y_{1}\right)+p_{X}\left(y_{1},y_{1}\right)+1-p_{X}\left(x_{1},x_{1}\right)-p_{X}\left(y_{1},y_{1}\right)\\
 & =p_{X}\left(x_{1},y_{1}\right)+2.
\end{align*}
Hence $\ns{p_{X}}\left(x,y\right)$ is finite.

Let $x,y\in\SPNS\left(X\right)$ and suppose that $x\approx_{X}^{p}x'$
and $y\approx_{X}^{p}y'$. Then
\begin{align*}
\ns{p_{X}}\left(x,y\right) & \leq\ns{p_{X}}\left(x,x'\right)+\ns{p_{X}}\left(x',y'\right)+\ns{p_{X}}\left(y',y\right)-\ns{p_{X}}\left(x',x'\right)-\ns{p_{X}}\left(y',y'\right)\\
 & \approx_{\mathbb{R}}\ns{p_{X}}\left(x',x'\right)+\ns{p_{X}}\left(x',y'\right)+\ns{p_{X}}\left(y',y'\right)-\ns{p_{X}}\left(x',x'\right)-\ns{p_{X}}\left(y',y'\right)\\
 & =\ns{p_{X}}\left(x',y'\right),\\
\ns{p_{X}}\left(x',y'\right) & \leq\ns{p_{X}}\left(x',x\right)+\ns{p_{X}}\left(x,y\right)+\ns{p_{X}}\left(y,y'\right)-\ns{p_{X}}\left(x,x\right)-\ns{p_{X}}\left(y,y\right)\\
 & \approx_{\mathbb{R}}\ns{p_{X}}\left(x,x\right)+\ns{p_{X}}\left(x,y\right)+\ns{p_{X}}\left(y,y\right)-\ns{p_{X}}\left(x,x\right)-\ns{p_{X}}\left(y,y\right)\\
 & =\ns{p_{X}}\left(x,y\right),
\end{align*}
so $\ns{p_{X}}\left(x,y\right)\approx_{\mathbb{R}}\ns{p_{X}}\left(x',y'\right)$.
\end{proof}
\begin{claim}
\label{claim:p-is-a-pmetric}$p_{\bar{X}}$ is a pmetric on $\bar{X}$.
\end{claim}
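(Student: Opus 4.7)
The plan is to verify axioms (P1)--(P4) for $p_{\bar{X}}$ by first transferring the corresponding axioms of $p_{X}$ to the nonstandard extension, then taking standard parts. Well-definedness and finiteness of the values are already guaranteed by \prettyref{claim:welldef-of-p-barX}, so the standard part operation is legitimate throughout, and nonnegativity of $p_{\bar{X}}$ is inherited from the transfer of nonnegativity of $p_{X}$.

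For the three ``easy'' axioms I would argue as follows. Axiom (P3) transfers to give $\ns{p_{X}}(x,y) = \ns{p_{X}}(y,x)$ for every $x,y \in \ns{X}$; taking standard parts yields (P3) for $p_{\bar{X}}$. Axiom (P2) transfers to $\ns{p_{X}}(x,x) \leq \ns{p_{X}}(x,y)$, and monotonicity of the standard part on finite hyperreals gives (P2) for $p_{\bar{X}}$. Axiom (P4) transfers to
\[
\ns{p_{X}}(x,z) + \ns{p_{X}}(y,y) \leq \ns{p_{X}}(x,y) + \ns{p_{X}}(y,z),
\]
and additivity plus monotonicity of the standard part deliver (P4) for $p_{\bar{X}}$.

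The only step with any real content is (P1). Suppose
\[
p_{\bar{X}}([x],[x]) = p_{\bar{X}}([x],[y]) = p_{\bar{X}}([y],[y]).
\]
Then the three finite hyperreals $\ns{p_{X}}(x,x)$, $\ns{p_{X}}(x,y)$, $\ns{p_{X}}(y,y)$ share a common standard part, and any two finite hyperreals with the same standard part are infinitesimally close. Hence
\[
\ns{p_{X}}(x,x) \approx_{\mathbb{R}} \ns{p_{X}}(x,y) \approx_{\mathbb{R}} \ns{p_{X}}(y,y),
\]
which is precisely the definition of $x \approx_{X}^{p} y$, so $[x] = [y]$ in $\bar{X}$. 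I expect no significant obstacle here: the main subtlety is only to notice that (P1) for $p_{\bar{X}}$ is essentially the definition of $\approx_{X}^{p}$ read off at the level of standard parts, which is why the $\approx_{X}^{p}$-quotient (rather than any finer one) is the right choice for the underlying set of $\bar{X}$.
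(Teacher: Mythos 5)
Your proof is correct and follows essentially the same route as the paper: transfer the axioms (P1)--(P4) of $p_{X}$ to $\ns{p_{X}}$ and pass to standard parts, with (P1) handled by observing that equality of standard parts of finite hyperreals means infinitesimal closeness, which is exactly the definition of $\approx_{X}^{p}$. No discrepancies to report.
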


\begin{proof}
$p_{\bar{X}}$ inherits the pmetric property from $p_{X}$ by the
transfer principle.
\begin{enumerate}
\item[(P1)] 
\begin{align*}
p_{\bar{X}}\left(\left[x\right],\left[x\right]\right)=p_{\bar{X}}\left(\left[x\right],\left[y\right]\right)=p_{\bar{X}}\left(\left[y\right],\left[y\right]\right) & \implies\ns{p_{X}}\left(x,x\right)\approx_{\mathbb{R}}\ns{p_{X}}\left(x,y\right)\approx_{\mathbb{R}}\ns{p_{X}}\left(y,y\right).\\
 & \implies x\approx_{X}^{p}y\\
 & \implies\left[x\right]=\left[y\right].
\end{align*}
\item[(P2)] 
\begin{align*}
p_{\bar{X}}\left(\left[x\right],\left[x\right]\right) & =\st{\left(\ns{p_{X}}\left(x,x\right)\right)}\\
 & \leq\st{\left(\ns{p_{X}}\left(x,y\right)\right)}\\
 & =p_{\bar{X}}\left(\left[x\right],\left[y\right]\right).
\end{align*}
\item[(P3)] 
\begin{align*}
p_{\bar{X}}\left(\left[x\right],\left[y\right]\right) & =\st{\left(\ns{p_{X}}\left(x,y\right)\right)}\\
 & =\st{\left(\ns{p_{X}}\left(y,x\right)\right)}\\
 & =p_{\bar{X}}\left(\left[y\right],\left[x\right]\right).
\end{align*}
\item[(P4)] 
\begin{align*}
p_{\bar{X}}\left(\left[x\right],\left[z\right]\right)+p_{\bar{X}}\left(\left[y\right],\left[y\right]\right) & =\st{\left(\ns{p_{X}}\left(x,z\right)\right)}+\st{\left(\ns{p_{X}}\left(y,y\right)\right)}\\
 & =\st{\left(\ns{p_{X}}\left(x,z\right)+\ns{p_{X}}\left(y,y\right)\right)}\\
 & \leq\st{\left(\ns{p_{X}}\left(x,y\right)+\ns{p_{X}}\left(y,z\right)\right)}\\
 & =\st{\left(\ns{p_{X}}\left(x,y\right)\right)}+\st{\left(\ns{p_{X}}\left(y,z\right)\right)}\\
 & =p_{\bar{X}}\left(\left[x\right],\left[y\right]\right)+p_{\bar{X}}\left(\left[y\right],\left[z\right]\right).\qedhere
\end{align*}
\end{enumerate}
\end{proof}
\begin{claim}
The map $i\colon X\ni x\mapsto\left[x\right]\in\bar{X}$ is an isometric
embedding.
\end{claim}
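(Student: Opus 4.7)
The plan is to verify three things: that $i$ is well-defined (every standard point lies in $\SPNS(X)$), that $i$ preserves the pmetric, and that $i$ is injective (which here follows from the pmetric property).

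For well-definedness, given any $x\in X$, we need $x\in\SPNS(X)$ so that $[x]$ is a legitimate element of $\bar{X}$. This is immediate by taking the witness $x_{\varepsilon}:=x$ for every $\varepsilon>0$: both defining inequalities
\[
\ns{p_{X}}\left(x_{\varepsilon},x\right)<p_{X}\left(x_{\varepsilon},x_{\varepsilon}\right)+\varepsilon,\qquad \ns{p_{X}}\left(x,x_{\varepsilon}\right)<\ns{p_{X}}\left(x,x\right)+\varepsilon
\]
reduce to $p_{X}(x,x)<p_{X}(x,x)+\varepsilon$, which is trivially true.

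For the isometry condition, take $x,y\in X$. By the transfer principle $\ns{p_{X}}(x,y)=p_{X}(x,y)$, which is a standard real, so its standard part equals itself. Hence
\[
p_{\bar{X}}\left(\left[x\right],\left[y\right]\right)=\st{\left(\ns{p_{X}}\left(x,y\right)\right)}=p_{X}\left(x,y\right),
\]
as required. Note this step implicitly uses \prettyref{claim:welldef-of-p-barX} to know the expression is meaningful.

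For injectivity, suppose $i(x)=i(y)$, i.e.\ $x\approx_{X}^{p}y$. Then $\ns{p_{X}}(x,x)$, $\ns{p_{X}}(x,y)$, $\ns{p_{X}}(y,y)$ are pairwise infinitesimally close. But all three are standard reals (since $x,y\in X$), so the infinitesimal closenesses are genuine equalities: $p_{X}(x,x)=p_{X}(x,y)=p_{X}(y,y)$. Axiom (P1) for $p_{X}$ then forces $x=y$. No step here looks like a serious obstacle; the only subtle point is to remember that for standard inputs the hyperreal distance coincides with its standard part, which is what collapses both the isometry check and the injectivity check to elementary applications of the pmetric axioms.
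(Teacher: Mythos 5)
Your proof is correct and simply fills in the routine verifications (membership of standard points in $\SPNS\left(X\right)$ via the witness $x_{\varepsilon}:=x$, preservation of the pmetric by transfer and standard parts, and injectivity from (P1)) that the paper dismisses with the single word ``Immediate.'' There is no divergence in approach.
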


\begin{proof}
Immediate.
\end{proof}
\begin{claim}
\label{claim:X-is-symmetrically-dense}$i\left(X\right)$ is symmetrically
dense in $\bar{X}$.
\end{claim}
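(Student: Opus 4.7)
The plan is to verify the symmetric denseness condition directly by unpacking the definition of symmetric prenearstandardness, with the small but essential twist of applying it to $\varepsilon/2$ rather than $\varepsilon$ in order to obtain strict inequalities.

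More concretely, I would fix an arbitrary element $[x] \in \bar{X}$ with representative $x \in \SPNS(X)$, and an arbitrary $\varepsilon > 0$. By the definition of $\SPNS(X)$ applied with parameter $\varepsilon/2$, I obtain a standard point $y := x_{\varepsilon/2} \in X$ satisfying simultaneously
\[
\ns{p_X}(y, x) < p_X(y,y) + \varepsilon/2 \quad\text{and}\quad \ns{p_X}(x, y) < \ns{p_X}(x,x) + \varepsilon/2.
\]
Then I take standard parts of both inequalities. Because $\st$ converts the strict hyperreal inequalities into non-strict standard inequalities, and using that $p_{\bar{X}}([y],[y]) = p_X(y,y)$ by the isometric embedding property, I obtain
\[
p_{\bar{X}}([y],[x]) \leq p_X(y,y) + \varepsilon/2 < p_{\bar{X}}([y],[y]) + \varepsilon,
\]
\[
p_{\bar{X}}([x],[y]) \leq p_{\bar{X}}([x],[x]) + \varepsilon/2 < p_{\bar{X}}([x],[x]) + \varepsilon.
\]
This says exactly that $[x] \in B_\varepsilon([y])$ and $[y] = i(y) \in B_\varepsilon([x])$, which is the symmetric denseness condition.

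There is essentially no obstacle here, since the notion of symmetric prenearstandardness was tailored precisely to furnish, for each $\varepsilon$, a standard witness $y$ lying jointly inside the right balls. The only minor subtlety is remembering the $\varepsilon/2$ trick to upgrade the $\leq$ produced by taking standard parts back to the strict $<$ required in the definition of an open ball.
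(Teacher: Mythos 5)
Your proposal is correct and follows essentially the same argument as the paper: apply symmetric prenearstandardness with parameter $\varepsilon/2$ to obtain a standard witness, then take standard parts and use $\varepsilon/2<\varepsilon$ to recover the strict inequalities defining the two open balls.
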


\begin{proof}
Let $\left[x\right]\in\bar{X}$. Since $x$ is symmetrically prenearstandard,
we can find, for each $\varepsilon>0$, an $x_{\varepsilon}\in X$
such that $\ns{p_{X}}\left(x_{\varepsilon},x\right)<\ns{p_{X}}\left(x_{\varepsilon},x_{\varepsilon}\right)+\varepsilon/2$
and $\ns{p_{X}}\left(x,x_{\varepsilon}\right)<\ns{p_{X}}\left(x,x\right)+\varepsilon/2$.
Then
\begin{align*}
p_{\bar{X}}\left(\left[x_{\varepsilon}\right],\left[x\right]\right) & =\st{\left(\ns{p_{X}}\left(x_{\varepsilon},x\right)\right)}\\
 & \leq\st{\left(\ns{p_{X}}\left(x_{\varepsilon},x_{\varepsilon}\right)+\varepsilon/2\right)}\\
 & <\st{\left(\ns{p_{X}}\left(x_{\varepsilon},x_{\varepsilon}\right)\right)}+\varepsilon\\
 & =p_{\bar{X}}\left(\left[x_{\varepsilon}\right],\left[x_{\varepsilon}\right]\right)+\varepsilon,
\end{align*}
so $p_{\bar{X}}\left(\left[x_{\varepsilon}\right],\left[x\right]\right)<p_{\bar{X}}\left(\left[x_{\varepsilon}\right],\left[x_{\varepsilon}\right]\right)+\varepsilon$.
Similarly we have that $p_{\bar{X}}\left(\left[x\right],\left[x_{\varepsilon}\right]\right)<p_{\bar{X}}\left(\left[x\right],\left[x\right]\right)+\varepsilon$.
Hence $i\left(X\right)$ is symmetrically dense in $\bar{X}$.
\end{proof}
\begin{claim}
$\bar{X}$ is p-Cauchy complete.
\end{claim}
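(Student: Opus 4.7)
The plan is to take a p-Cauchy sequence $(\xi_{n})_{n\in\mathbb{N}}$ in $\bar{X}$ with Cauchy limit $L:=\lim_{n,m\to\infty}p_{\bar{X}}(\xi_{n},\xi_{m})$ (the diagonal $\lim_{n}p_{\bar{X}}(\xi_{n},\xi_{n})=L$ being automatic) and construct a p-limit in $\bar{X}$. Because $\bar{X}=\SPNS(X)/\approx_{X}^{p}$ is an external quotient, I will not try to extend $(\xi_{n})$ internally in $\bar{X}$; instead I pull it back to a p-Cauchy sequence inside the standard space $X$ and apply transfer there.

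First, using the symmetric denseness of $i(X)$ in $\bar{X}$ from \prettyref{claim:X-is-symmetrically-dense}, for each $n$ I pick $a_{n}\in X$ with $p_{\bar{X}}(i(a_{n}),\xi_{n})<p_{\bar{X}}(i(a_{n}),i(a_{n}))+1/n$ and $p_{\bar{X}}(\xi_{n},i(a_{n}))<p_{\bar{X}}(\xi_{n},\xi_{n})+1/n$. Combined with (P2) and (P3), this yields $|p_{\bar{X}}(i(a_{n}),i(a_{n}))-p_{\bar{X}}(\xi_{n},\xi_{n})|<1/n$ and the analogous estimate for $p_{\bar{X}}(\xi_{n},i(a_{n}))$. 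Two applications of (P4), chaining through $\xi_{n}$ and then $\xi_{m}$ as middle points between $i(a_{n})$ and $i(a_{m})$, sandwich $p_{X}(a_{n},a_{m})=p_{\bar{X}}(i(a_{n}),i(a_{m}))$ between $p_{\bar{X}}(\xi_{n},\xi_{m})\pm(1/n+1/m)$. Hence $(a_{n})$ is p-Cauchy in $X$ with the same Cauchy limit $L$.

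Since $(a_{n})$ is a standard sequence it extends to an internal $\ns{a}\colon\ns{\mathbb{N}}\to\ns{X}$, and transfer of the p-Cauchy property gives $\ns{p_{X}}(\ns{a}_{N},\ns{a}_{M})\approx L$ for all infinite $N,M\in\ns{\mathbb{N}}$. Fix any infinite $N_{0}$ and put $y:=\ns{a}_{N_{0}}$. Then $\ns{p_{X}}(y,y)\approx L$, and for any standard $\varepsilon>0$, choosing a standard $n$ large enough that both $p_{X}(a_{n},a_{n})$ and $\ns{p_{X}}(a_{n},y)$ lie within $\varepsilon/2$ of $L$ (and using (P3) for the reverse inequality) verifies the two defining inequalities for $y\in\SPNS(X)$, so $[y]\in\bar{X}$ is well-defined.

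It remains to show $\xi_{n}\to[y]$ in the p-convergence sense. The equality $p_{\bar{X}}([y],[y])=\st{(\ns{p_{X}}(y,y))}=L$ is immediate. The lower bound $p_{\bar{X}}(\xi_{n},[y])\geq p_{\bar{X}}([y],[y])=L$ comes from (P2), and the upper bound follows from (P4) with middle point $i(a_{n})$:
\[
p_{\bar{X}}(\xi_{n},[y])\leq p_{\bar{X}}(\xi_{n},i(a_{n}))+p_{\bar{X}}(i(a_{n}),[y])-p_{\bar{X}}(i(a_{n}),i(a_{n})),
\]
each of whose three standard terms tends to $L$ as $n\to\infty$ (the middle one via transfer of p-Cauciness of $(a_{n})$ applied to the pair $(n,N_{0})$). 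The main obstacle is the sandwich step for $(a_{n})$ in the second paragraph: one must chain (P4) twice in each direction and keep careful track of the $1/n,1/m$ error terms. Once $(a_{n})$ is in hand, the remaining work is a routine transfer argument combined with (P2)--(P4).
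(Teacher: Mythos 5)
Your proof is correct, and its nonstandard core is the same as the paper's: extend the approximating standard sequence internally and take its value at an infinite index as the limit point, then verify symmetric prenearstandardness and p-convergence exactly as the paper does for $[\ns{x_{\omega}}]$. The one genuine difference is in the reduction step: the paper simply invokes \cite[Lemma 4]{GL15} to pass from arbitrary p-Cauchy sequences in $\bar{X}$ to p-Cauchy sequences in $i(X)$, whereas you prove that reduction by hand, using symmetric denseness to pick $a_{n}\in X$ with $i(a_{n})\in B_{1/n}(\xi_{n})$ and $\xi_{n}\in B_{1/n}(i(a_{n}))$ and then chaining (P4) to get $\left|p_{X}(a_{n},a_{m})-p_{\bar{X}}(\xi_{n},\xi_{m})\right|<1/n+1/m$. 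Your estimates there are right (note that both one-sided ball conditions are needed, since (P2) only bounds the self-distances from one side), so your version is self-contained where the paper's is not; the cost is the extra bookkeeping you yourself flag as the main obstacle. Either way the argument goes through.
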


\begin{proof}
According to \cite[Lemma 4]{GL15}, it suffices to show that every
p-Cauchy sequence in $i\left(X\right)$ p-converges in $\bar{X}$.
Let $\set{x_{n}}_{n\in\mathbb{N}}$ be a p-Cauchy sequence in $X$.
Fix an infinite $\omega\in\ns{\mathbb{N}}\setminus\mathbb{N}$. Let
us verify that $\set{\left[x_{n}\right]}_{n\in\mathbb{N}}$ p-converges
to $\left[\ns{x_{\omega}}\right]$.

Let $r:=\lim_{n,m\to\infty}p_{X}\left(x_{n},x_{m}\right)$. Then for
each $\varepsilon>0$ there exists an $N_{\varepsilon}\in\mathbb{N}$
such that $\left|p_{X}\left(x_{n},x_{m}\right)-r\right|\leq\varepsilon$
for any $n,m\geq N_{\varepsilon}$. By the transfer principle, $\left|\ns{p_{X}}\left(\ns{x_{n}},\ns{x_{\omega}}\right)-r\right|\leq\varepsilon$
holds for all $n\geq N_{\varepsilon}$, where $n$ can be nonstandard.
Since $\ns{p_{X}}\left(x_{N_{\varepsilon}},\ns{x_{\omega}}\right)\leq r+\varepsilon$
and $r-\varepsilon\leq p_{X}\left(x_{N_{\varepsilon}},x_{N_{\varepsilon}}\right)$,
we have that $\ns{p_{X}}\left(x_{N_{\varepsilon}},\ns{x_{\omega}}\right)\leq r+\varepsilon\leq p_{X}\left(x_{N_{\varepsilon}},x_{N_{\varepsilon}}\right)+2\varepsilon$.
Similarly, since $r-\varepsilon\leq\ns{p_{X}}\left(\ns{x_{\omega}},\ns{x_{\omega}}\right)$,
it follows that $\ns{p_{X}}\left(\ns{x_{\omega}},x_{N_{\varepsilon}}\right)\leq r+\varepsilon\leq\ns{p_{X}}\left(\ns{x_{\omega}},\ns{x_{\omega}}\right)+2\varepsilon$.
Hence $\ns{x_{\omega}}$ is symmetrically prenearstandard, i.e. $\left[\ns{x_{\omega}}\right]\in\bar{X}$.

Since $r-\varepsilon\leq\ns{p_{X}}\left(x_{n},\ns{x_{\omega}}\right),p_{X}\left(x_{n},x_{m}\right),\ns{p_{X}}\left(\ns{x_{\omega}},\ns{x_{\omega}}\right)\leq r+\varepsilon$
for all $n,m\geq\mathbb{N}_{\varepsilon}$ and $\varepsilon\in\mathbb{R}_{>0}$,
it follows that $\lim_{n\to\infty}p_{\bar{X}}\left(\left[x_{n}\right],\left[\ns{x_{\omega}}\right]\right)=\lim_{n,m\to\infty}p_{\bar{X}}\left(\left[x_{n}\right],\left[x_{m}\right]\right)=p_{\bar{X}}\left(\left[\ns{x_{\omega}}\right],\left[\ns{x_{\omega}}\right]\right)=r$.
Therefore $\set{\left[x_{n}\right]}_{n\in\mathbb{N}}$ p-converges
to $\left[\ns{x_{\omega}}\right]$.
\end{proof}
Thus the proof is completed. Notice that \prettyref{claim:X-is-symmetrically-dense}
depends on \emph{symmetric} prenearstandardness, while \prettyref{claim:p-is-a-pmetric}
does not. So one can obtain another pmetric space by replacing $\SPNS\left(X\right)$
with $\PNS\left(X\right)$. The resulting space $\PNS\left(X\right)/\approx_{X}^{p}$
has $X$ as a dense subspace, although may not have it as a symmetrically
dense subspace. The study of this space is a future work.

Our nonstandard construction might be useful for integrating various
completions of pmetric spaces. For example, $0$-Cauchy completions
developed by Moshokoa \cite{Mos16} can be realised as subspaces
of the above constructed completions:
\begin{align*}
\bar{X}_{0} & :=\set{\left[x\right]|x\in X}\cup\set{\left[x\right]|x\in\SPNS\left(X\right)\text{ and }\ns{p_{X}}\left(x,x\right)\approx_{\mathbb{R}}0}\\
 & \cong X\cup\set{x\in\SPNS\left(X\right)|\ns{p_{X}}\left(x,x\right)\approx_{\mathbb{R}}0}/\approx_{X}^{p}
\end{align*}
is $0$-Cauchy complete, and contains $X$ as a symmetrically dense
subspace.

\bibliographystyle{amsplain}
\bibliography{bibliography}

\section*{Corrigendum}
In the accepted version (this version), there was a typo in the sentence before \prettyref{claim:welldef-of-p-barX}. The space $\left(\bar{X}, p_{\bar{X}}\right)$ is a \emph{symmetric} p-Cauchy completion of $\left(X, p_{X}\right)$, not an \emph{asymmetric} one. This has been corrected in the final version.

Regrettably, in the the final version, there was a typo in the last sentence of \prettyref{sec:Nonstandard-treatment}. The subspace $X$ of $\bar{X}_{0}$ is \emph{symmetrically} dense, not \emph{asymmetrically}. Also, ``saturation'' in footnote 1 should be replaced with ``enlarging'', a weaker form of saturation.

These errors do not affect the conclusions of the article. The author apologises for these errors.

\end{document}